\theoremstyle{plain}
\newtheorem{theorem}{Theorem}
\numberwithin{equation}{section}
\begin{document}

\title {Implicational Propositional Calculus\\Tableaux and Completeness}

\date{}

\author[P.L. Robinson]{P.L. Robinson}

\address{Department of Mathematics \\ University of Florida \\ Gainesville FL 32611  USA }

\email[]{paulr@ufl.edu}

\subjclass{} \keywords{}

\begin{abstract}

We discuss tableaux for the Implicational Propositional Calculus and show how they may be used to establish its completeness. 

\end{abstract}

\maketitle

\medbreak

\section{Introduction} 

Tableaux [5] and dual tableaux [1] facilitate not only an intuitive analysis of the logical structure of propositional formulas but also an elegant proof that the classical Propositional Calculus is complete in the sense that all tautologies are theorems. The Implicational Propositional Calculus (IPC) has the conditional ($\supset$) as its only primitive connective and modus ponens (MP) as its only inference rule; it is traditionally based on the three axiom schemes 
$$ [(A \supset B) \supset A] \supset A$$
$$ A \supset (B \supset A)$$
$$[A \supset (B \supset C)] \supset [(A \supset B) \supset (A \supset C)]$$
of which the first scheme is due to Peirce. It is well-known that IPC is complete: a Kalm\`ar approach is outlined in Exercises 6.3 - 6.5 of [2]; a Lindenbaum approach is offered in [3]. Our purpose in this paper is to adapt the signed dual tableaux of [6] and thereby present another proof of IPC completeness.  

\section{IPC: Tableaux and Completeness}

\medbreak 

Throughout, we shall make free use of the fact that the Deduction Theorem (DT) holds in IPC: if $A$ and the set $\Gamma$ of (well-formed IPC) formulas furnish a deduction of $B$, then $\Gamma$ alone furnishes a deduction of $A \supset B$; with standard symbolism, if $\Gamma, A \vdash B$ then $\Gamma \vdash A \supset B$. A special case is usually proved en route to DT: namely, $A \supset A$ is a theorem scheme of IPC. A particularly useful consequence of DT is Hypothetical Syllogism (HS): $A \supset B, B \supset C \vdash A \supset C$. None of these results requires the Peirce axiom scheme. 

\medbreak 

We shall also make quite full use of the following result, which is Exercise 6.3 in [2]. Here, $QA = Q(A)$ stands for $A \supset Q$ so that $QQA = (A \supset Q) \supset Q$ and so on. 

\medbreak 

\begin{theorem} \label{Robbin} 
Each of the following is a theorem scheme of IPC: \par
{\rm (1)} $(A \supset B) \supset [(B \supset C) \supset (A \supset C)]$ \par 
{\rm (2)} $(A \supset B) \supset (QB \supset QA)$ \par 
{\rm (3)} $A \supset QQA$ \par
{\rm (4)} $QQQA \supset QA$ \par 
{\rm (5)} $QQB \supset QQ(A \supset B)$ \par 
{\rm (6)} $QQA \supset [QB \supset Q(A \supset B)]$ \par 
{\rm (7)} $QA \supset QQ(A \supset B)$ \par 
{\rm (8)} $(QA \supset B) \supset [(QQA \supset B) \supset QQB].$ 
\end{theorem}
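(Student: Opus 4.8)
The plan is to lean entirely on the three tools already in hand — the Deduction Theorem (DT), Hypothetical Syllogism (HS), and modus ponens (MP) — reserving the Peirce axiom for the one place where it is genuinely needed. I would first dispose of (1)--(3), which are merely the internalizations of the primitive rules. For (1), HS gives $A \supset B,\, B \supset C \vdash A \supset C$, and two applications of DT discharge the hypotheses to yield $(A \supset B)\supset[(B\supset C)\supset(A\supset C)]$. Part (2) is then immediate: it is precisely the instance $C := Q$ of (1), since $QB = B \supset Q$ and $QA = A\supset Q$. For (3), MP gives $A,\, A \supset Q \vdash Q$, i.e. $A,\, QA \vdash Q$, and two applications of DT produce $A \supset (QA \supset Q) = A \supset QQA$.

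Next I would obtain (4) and (5) by feeding earlier schemes through the contraposition scheme (2). For (4), instantiate (2) with $Y := QQA$ to get $(A \supset QQA)\supset(QQQA \supset QA)$, noting that $Q(QQA) = QQQA$; since $A \supset QQA$ is (3), MP delivers $QQQA \supset QA$. For (5), start from the axiom $B \supset (A \supset B)$ and apply (2) to obtain $Q(A\supset B)\supset QB$; a second application of (2), now with $X := Q(A\supset B)$ and $Y := QB$, gives $(Q(A\supset B)\supset QB)\supset(QQB \supset QQ(A\supset B))$, and MP yields (5).

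Parts (6) and (8) I would prove by stripping the outer conditionals with DT and then arguing inside. For (6), DT reduces the goal to $QQA,\, QB,\, A\supset B \vdash Q$: HS applied to $A \supset B$ and $QB = B\supset Q$ gives $QA$, and MP against $QQA = QA \supset Q$ gives $Q$; discharging $A\supset B$, then $QB$, then $QQA$ rebuilds the statement. For (8), DT reduces the goal to $QA\supset B,\, QQA\supset B,\, QB \vdash Q$; here HS applied to $QA \supset B$ and $QB = B\supset Q$ yields $QQA$, while HS applied to $QQA\supset B$ and $B \supset Q$ yields $QQQA = QQA\supset Q$, whence MP gives $Q$. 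Notably neither of these requires Peirce.

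The one genuine obstacle is (7), which is \emph{not} derivable from the first two axiom schemes alone and so must invoke Peirce. After using DT to reduce the goal to $QA,\, Q(A\supset B)\vdash Q$, the decisive move is to prove the auxiliary $(Q\supset B)\supset Q$: under the extra hypothesis $Q\supset B$, HS applied to $QA = A\supset Q$ and $Q\supset B$ gives $A \supset B$, and MP against $Q(A\supset B) = (A\supset B)\supset Q$ gives $Q$, so DT discharges $Q\supset B$ to leave $(Q\supset B)\supset Q$. Now the Peirce instance $[(Q\supset B)\supset Q]\supset Q$, with its two slots filled by $Q$ and $B$, together with MP produces $Q$, and two further applications of DT complete (7). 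Finding exactly this instantiation — turning the target $Q$ into the conclusion slot of Peirce's law — is the crux; everything else is routine bookkeeping with DT, HS, and MP.
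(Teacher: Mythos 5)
Your derivations are all correct, but there is nothing in the paper to compare them against: the paper deliberately omits this proof, deferring to Exercise 6.3 of Robbin's book [2] and recording only the key structural fact that part (7) alone requires the Peirce axiom scheme. Your proposal supplies exactly what is left as an exercise, and does so economically: (1)--(3) internalize HS and MP via DT; (4) and (5) follow by feeding (3) and the axiom instance $B \supset (A \supset B)$ through the contraposition scheme (2); (6) and (8) reduce via DT to short HS/MP computations among the hypotheses; and (7) is handled by deriving $(Q \supset B) \supset Q$ from the hypotheses $QA$ and $Q(A \supset B)$ and then detaching against the Peirce instance $[(Q \supset B) \supset Q] \supset Q$. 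Your isolation of Peirce to part (7) matches the paper's remark precisely (the paper adds, though you need not prove it, that the implication is reversible: (7) with $Q := A$ yields the Peirce scheme, so (7) genuinely cannot be obtained from the other two axiom schemes). One cosmetic point: in (4) and (5) you instantiate scheme (2) using letters $X$ and $Y$ for its schematic slots even though (2) is displayed with $A$ and $B$; the instances you need, e.g. $(A \supset QQA) \supset (QQQA \supset QA)$, are valid, but stating the substitutions in the scheme's own letters would remove any ambiguity.
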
 

\begin{proof} 
Omitted; this is an exercise! We note that only (7) requires the Peirce axiom scheme; indeed, (7) implies the Peirce scheme when $Q: = A$. 
\end{proof} 

\medbreak 

Disjunction may be defined within IPC: explicitly, 
$$A \vee B := (A \supset B) \supset B.$$
Not only is it the case that $A \vdash A \vee B$ and $B \vdash A \vee B$: by virtue of the Peirce axiom scheme, it is also the case that if $A \vdash C$ and $B \vdash C$ then $A \vee B \vdash C$; for this and related observations, see [4]. Observe at once that $B \vee A \vdash A \vee B$ and vice versa: thus, if $B \vee A$ is a theorem of IPC then so is $A \vee B$ and conversely; we shall make use of this observation in what follows. 

\medbreak 

We may define multiple disjunctions inductively, thus 
$$A_1 \vee \dots \vee A_{N + 1} = (A_1 \vee \dots \vee A_N) \vee A_{N + 1}.$$

\medbreak 

\begin{theorem} \label{dis}
{\rm (1)} If $1 \leqslant n \leqslant N$ then $A_n \vdash A_1 \vee \cdots \vee A_N.$\\
{\rm (2)} When $1 \leqslant n \leqslant N$ let $A_n \vdash B$; then $A_1 \vee \cdots \vee A_N \vdash B.$\\
{\rm (3)} When $1 \leqslant n \leqslant N$ let $A_n \vdash B_n$; then $A_1 \vee \cdots \vee A_N \vdash B_1 \vee \cdots \vee B_N.$
\end{theorem}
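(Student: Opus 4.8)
The plan is to prove all three parts by induction on $N$, leaning on the three binary-disjunction facts recorded just before the statement: $A \vdash A \vee B$, that $B \vdash A \vee B$, and (via the Peirce scheme) the combining rule that $A \vdash C$ together with $B \vdash C$ yields $A \vee B \vdash C$. Throughout I would silently use the transitivity of $\vdash$, namely that $A \vdash C$ and $C \vdash D$ give $A \vdash D$, since concatenating a deduction of $C$ from $A$ with a deduction of $D$ from $C$ produces a deduction of $D$ from $A$.

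For part (1) I would induct on $N$, the case $N = 1$ being the triviality $A_1 \vdash A_1$. Writing $D = A_1 \vee \cdots \vee A_N$ so that $A_1 \vee \cdots \vee A_{N + 1} = D \vee A_{N + 1}$, the inductive step splits into two cases. If $n = N + 1$ then $A_{N + 1} \vdash D \vee A_{N + 1}$ is precisely the fact $B \vdash A \vee B$. If $1 \leqslant n \leqslant N$ then the inductive hypothesis gives $A_n \vdash D$, while $D \vdash D \vee A_{N + 1}$ is the fact $A \vdash A \vee B$; transitivity then delivers $A_n \vdash D \vee A_{N + 1}$.

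For part (2) I would again induct on $N$, the case $N = 1$ being immediate. In the inductive step, with $D = A_1 \vee \cdots \vee A_N$ and hypotheses $A_n \vdash B$ for $1 \leqslant n \leqslant N + 1$, the inductive hypothesis applied to the first $N$ disjuncts yields $D \vdash B$; combining this with $A_{N + 1} \vdash B$ through the Peirce-based combining rule gives $D \vee A_{N + 1} \vdash B$, which is exactly $A_1 \vee \cdots \vee A_{N + 1} \vdash B$.

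Part (3) I would deduce from the first two parts rather than by a fresh induction. For each $n$ the hypothesis $A_n \vdash B_n$ combines with part (1) (which supplies $B_n \vdash B_1 \vee \cdots \vee B_N$) and transitivity to give $A_n \vdash B_1 \vee \cdots \vee B_N$; applying part (2) with $B := B_1 \vee \cdots \vee B_N$ then yields the desired $A_1 \vee \cdots \vee A_N \vdash B_1 \vee \cdots \vee B_N$. The only step that uses more than pure implicational manipulation is the combining rule invoked in part (2), which rests on the Peirce scheme; everything else is bookkeeping around the inductive definition of multiple disjunction, and keeping the two cases ($n \leqslant N$ versus $n = N + 1$) straight in the inductive steps is the main thing to get right.
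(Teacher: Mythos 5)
Your proposal is correct and matches the paper's proof: the paper likewise dispatches (1) and (2) with "elementary induction arguments" (precisely the ones you spell out, using the binary facts $A \vdash A \vee B$, $B \vdash A \vee B$, and the Peirce-based combining rule) and derives (3) from (1) and (2) exactly as you do. You have simply filled in the inductive details the paper leaves to the reader.
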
 

\begin{proof} 
(1) and (2) fall to elementary induction arguments. For (3): if $1 \leqslant n \leqslant N$ then $B_n \vdash B_1 \vee \cdots \vee B_N$ by (1) so that $A_n \vdash B_1 \vee \cdots \vee B_N$ by hypothesis and (2) ends the argument. 
\end{proof} 

\medbreak 

We record an immediate application of this theorem: as $QQ A = (A \supset Q) \supset Q = A \vee Q$ it follows that 
$$QQ (A_1 \vee \cdots \vee A_N) \vdash (QQ A_1) \vee \cdots \vee (QQ A_N).$$

\medbreak

We remark that although conjunctions themselves may not be definable within IPC, their proxies may appear as the antecedents to conditionals: in many respects, the IPC formula $A \supset (B \supset C)$ behaves as if it were $(A \wedge B)  \supset C$; of course, this reflects classical  importation and exportation. 

\medbreak 

We shall assume as known the theory of signed dual tableaux, for details of which we refer to [6]; see especially Chapter 6 and Chapter 7. Signed formulas of Type A in IPC have the form $\alpha = F (X \supset Y)$ with $\alpha_0 = T X$ and $\alpha_1 = F Y$; signed formulas of Type B in IPC have the form $\beta = T (X \supset Y)$ with $\beta_0 = F X$ and $\beta_1 = T Y$. According to the branching rules for signed dual tableaux, $\alpha$ has $\alpha_0$ and $\alpha_1$ as alternative consequences while $\beta$ has $\beta_0$ and $\beta_1$ as direct consequences: symbolically, 
$$\frac{\alpha}{\alpha_0 \; | \; \alpha_1} \; \; \; \; \; \frac{\beta}{\beta_0} \; .$$
$$\; \; \; \; \; \; \; \; \; \; \; \; \; \; \; \;  \beta_1$$

\medbreak

If $Z$ is a tautology of IPC then $T Z$ starts a complete signed dual tableau that is closed in the sense that each of its branches contains a signed formula and its conjugate (where conjugation is the interchange of $T$ and $F$). Here, the fact that we work within IPC rather than within the classical Propositional Calculus is essentially immaterial: the branching consequences of IPC formulas are themselves IPC formulas, and the conditional has the same semantic content in both systems. 

\medbreak 

Now, let us choose and fix a (well-formed) formula $Q$ of IPC; recall the abbreviations $Q W := W \supset Q$ and $QQ W := (W \supset Q) \supset Q$. Let $Z$ be a tautology of IPC (written without making use of the aforementioned abbreviations) and construct a closed signed dual tableau $\mathcal{T}$ starting from $T Z$. To $\mathcal{T}$ we apply the following transformation: replace each node of the form $T W$ by $QQW$; replace each node of the form $F W$ by $QW$. The result of this transformation is a tableau $\mathcal{T}_Q$ starting from $QQ Z$ with the following branching rules: \\

 A:  $\; \; \alpha = Q (X \supset Y)$ has alternative consequences $\alpha_0 = QQ X$ and $\alpha_1 = Q Y$;\\

 B:  $\; \; \beta = QQ (X \supset Y)$ has direct consequences $\beta_0 = Q X$ and $\beta_1 = QQ Y$. \\

\noindent
Each complete branch $\theta$ of $\mathcal{T}_Q$ is a sequence $(Z_0, \dots, Z_N)$ where $Z_0 = QQ Z$ and the form of each term $Z_n$ is either $Q W_n$ or $QQ W_n$ for some formula $W_n$; to say that each branch of $\mathcal{T}$ is closed is to say that each such $\theta$ contains among its terms a conjugate pair $Q W$ and $QQ W$ for some formula $W$. 

\medbreak 

In our presentation, we have taken advantage of the theory of signed dual tableaux. Instead, we could start afresh: define a dual $Q$-tableau for $Z$ to be a dual tableau starting from $QQ Z$ and constructed according to the branching rules A and B displayed above. A $Q$-tableau for $Z$ would start from $Q Z$ and follow branching rules according to which an $\alpha$ has direct consequences and a $\beta$ has alternative consequences. 

\medbreak 

Following the pattern established in [6] we now introduce a uniform axiom system $\mathbb{U}_Q$ with axiom schemes and inference rules defined as follows. Here, $\theta = (Z_0, \dots , Z_N)$ is a sequence in which each term $Z_n$ has the form $Q W_n$ or $QQ W_n$; this sequence is closed precisely when some pair of terms has the form $Z_i = Q W$ and $Z_j = QQ W$. \\

{\it Axioms}: All disjunctions $D (\theta) := Z_0 \vee \cdots \vee Z_N$ for which $\theta = (Z_0, \dots \, Z_N)$ is closed. \\

{\it Rule A}: If $\alpha$ is a term of $\theta$ then from $ D(\theta) \vee \alpha_0$ and $D(\theta) \vee \alpha_1$ (together) infer $D(\theta)$. \\

{\it Rule B}: If $\beta$ is a term of $\theta$ then from $D(\theta) \vee \beta_0$ or $D(\theta) \vee \beta_1$ (separately) infer $D(\theta)$. \\ 

\medbreak 

The symbol $\vdash$ will continue to signify deducibility within IPC. It will now be convenient to write $\mathbb{T}$ for the set of theorems of IPC. 

\medbreak 

\begin{theorem} \label{axioms}
Each axiom of $\mathbb{U}_Q$ is a theorem of IPC. 
\end{theorem}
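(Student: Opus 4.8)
The plan is to reduce the claim to a single ``excluded middle'' observation together with the disjunction machinery of Theorem \ref{dis}. Fix a closed sequence $\theta = (Z_0, \dots, Z_N)$; by hypothesis it contains a conjugate pair, say $Z_i = Q W$ and $Z_j = QQ W$ for some formula $W$. I want to show $\vdash D(\theta)$, where $D(\theta) = Z_0 \vee \cdots \vee Z_N$.

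First I would establish the scheme $\vdash A \vee QA$ for every formula $A$; applied with $A := QW$ this gives $\vdash QW \vee QQW$. Since $A \vee QA = (A \supset QA) \supset QA$, the Deduction Theorem reduces the task to showing $A \supset QA \vdash QA$. But $QA = A \supset Q$ and $A \supset QA = A \supset (A \supset Q)$, so assuming $A \supset (A \supset Q)$ together with $A$ and applying modus ponens twice yields $Q$; two further applications of DT then discharge first $A$ and then the hypothesis, delivering the theorem. This is merely the contraction principle, and it needs neither the Peirce scheme nor Theorem \ref{Robbin}.

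Next I would transfer this to the full disjunction. Because $QW$ and $QQW$ both occur among the terms $Z_0, \dots, Z_N$, Theorem \ref{dis}(1) gives $QW \vdash D(\theta)$ and $QQW \vdash D(\theta)$. Viewing $QW \vee QQW$ as a two-term disjunction each of whose disjuncts deduces $D(\theta)$, Theorem \ref{dis}(2) yields $QW \vee QQW \vdash D(\theta)$. Combining this with $\vdash QW \vee QQW$ from the previous step and the transitivity of $\vdash$, I conclude $\vdash D(\theta)$, as required.

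I expect the only real content to lie in the excluded-middle scheme $\vdash A \vee QA$; once that is in hand, the passage to an arbitrary closed disjunction is a routine application of Theorem \ref{dis}. The one point meriting care is that closedness is used solely to guarantee the presence of the conjugate pair $QW$, $QQW$: the remaining terms of $\theta$ play no role beyond being additional disjuncts, which Theorem \ref{dis}(1) absorbs automatically.
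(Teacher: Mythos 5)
Your proof is correct, but it takes a genuinely different route from the paper's. The paper never isolates the conjugate pair as a two-term disjunction: it derives $QW \supset D$ and $QQW \supset D$ from Theorem \ref{dis}, feeds these into Theorem \ref{Robbin} part (8) to conclude $QQD \in \mathbb{T}$, and then descends from $QQD$ to $D$ using the distribution of $QQ$ over disjunctions together with Theorem \ref{Robbin} part (4) — the latter applying precisely because every term $Z_n$ has the form $QW_n$ or $QQW_n$. You instead prove the relative excluded-middle scheme $\vdash A \vee QA$ outright; your derivation is sound, since $A \vee QA = [A \supset (A \supset Q)] \supset (A \supset Q)$ is just contraction, available from DT and MP alone. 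Specializing to $A := QW$ gives $\vdash QW \vee QQW$, and Theorem \ref{dis}(1),(2) plus cut then absorbs the remaining disjuncts of $D(\theta)$. What your route buys is economy: it bypasses Theorem \ref{Robbin} entirely (no part (4), no part (8), no $QQ$-distribution), so the whole argument rests only on DT and the disjunction calculus. One caveat on your closing remark: while the excluded-middle scheme itself needs no Peirce, the Peirce scheme has not been eliminated from your proof — it is hidden in Theorem \ref{dis}(2), whose disjunction-elimination property the paper explicitly attributes to Peirce; so the ``real content'' is shared between your scheme and that step. What the paper's longer route buys, conversely, is a use for Theorem \ref{Robbin} part (8) (its only application in the paper) and the pattern of passing through the double negation $QQD$, which echoes the way completeness itself is finally extracted from $QQZ$.
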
 

\begin{proof} 
Abbreviate the axiom $D (\theta) = Z_0 \vee \cdots \vee Z_N$ of $\mathbb{U}_Q$ by $D$ for convenience. Say $\theta$ contains the conjugate terms $Z_i = Q W$ and $Z_j = QQ W$. Each of $QW \supset D$ and $QQ W \supset D$ lies in $\mathbb{T}$ by Theorem \ref{dis}. Theorem \ref{Robbin} part (8) tells us that $(Q W \supset D) \supset [(QQ W \supset D) \supset QQ D] \in \mathbb{T}$. By two applications of MP we deduce that $QQ D \in \mathbb{T}$. Now, $QQ (Z_0 \vee \cdots \vee Z_n) \supset (QQ Z_0 \vee \cdots \vee QQ Z_N) \in \mathbb{T}$ by DT and the formula recorded after Theorem \ref{dis}; also, $ (QQ Z_0 \vee \cdots \vee QQ Z_N) \supset (Z_0 \vee \cdots \vee Z_N) \in \mathbb{T}$ by Theorem \ref{Robbin} part (4) and Theorem \ref{dis}. Finally, two further applications of MP place $D$ in $\mathbb{T}$ as claimed. 
\end{proof} 

\medbreak 

Rule A of $\mathbb{U}_Q$ is effectively a derived inference rule for IPC. 

\medbreak

\begin{theorem} \label{A}
Let $\theta$ have $\alpha$ as a term. If $ D(\theta) \vee \alpha_0 \in \mathbb{T}$ and $D(\theta) \vee \alpha_1 \in \mathbb{T}$ then $D(\theta) \in \mathbb{T}$. 
\end{theorem}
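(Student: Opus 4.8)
The plan is to exploit the fact that, for the Type~A formula $\alpha = Q(X \supset Y)$ with consequences $\alpha_0 = QQX$ and $\alpha_1 = QY$, the two consequences \emph{together} force $\alpha$ itself. This is exactly the content of Theorem~\ref{Robbin} part~(6): with $A := X$ and $B := Y$ it reads $QQX \supset [QY \supset Q(X \supset Y)]$, that is $\vdash \alpha_0 \supset (\alpha_1 \supset \alpha)$, equivalently $\alpha_0, \alpha_1 \vdash \alpha$. (One can also see this directly: assuming $X \supset Y$ alongside $\alpha_1 = Y \supset Q$ gives $X \supset Q$ by HS, whereupon $\alpha_0 = (X \supset Q) \supset Q$ yields $Q$ by MP; discharge $X \supset Y$ by DT.) Writing $D := D(\theta)$ and recalling that $\alpha$ is a term of $\theta$, Theorem~\ref{dis} part~(1) gives $\alpha \vdash D$; combining, $\alpha_0, \alpha_1 \vdash D$, and one application of DT yields $\alpha_1 \vdash \alpha_0 \supset D$.

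The difficulty is that IPC has no conjunction, so I cannot simply form $(D \vee \alpha_0) \wedge (D \vee \alpha_1)$ and distribute. Instead I would convert the second hypothesis into an implication sitting inside a disjunction. Applying Theorem~\ref{dis} part~(3) to $D \vee \alpha_1$ with the component deductions $D \vdash D$ and $\alpha_1 \vdash \alpha_0 \supset D$ gives $D \vee \alpha_1 \vdash D \vee (\alpha_0 \supset D)$; since $D \vee \alpha_1 \in \mathbb{T}$ by hypothesis, I obtain $D \vee (\alpha_0 \supset D) \in \mathbb{T}$.

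Next I would collapse this disjunction. Because $D \vdash \alpha_0 \supset D$ is an instance of the second axiom scheme, both disjuncts of $D \vee (\alpha_0 \supset D)$ deduce $\alpha_0 \supset D$, so Theorem~\ref{dis} part~(2) gives $D \vee (\alpha_0 \supset D) \vdash \alpha_0 \supset D$; hence $\vdash \alpha_0 \supset D$, that is $\alpha_0 \vdash D$. This manoeuvre, which replaces the missing conjunction, is the crux of the argument. With $\alpha_0 \vdash D$ in hand the first hypothesis finishes the job: since $D \vdash D$ as well, Theorem~\ref{dis} part~(2) applied to $D \vee \alpha_0$ gives $D \vee \alpha_0 \vdash D$, and as $D \vee \alpha_0 \in \mathbb{T}$ I conclude $D \in \mathbb{T}$.

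I expect the only genuinely delicate point to be the conversion step: recognising that $\alpha_1 \vdash \alpha_0 \supset D$ lets one trade the hypothesis $D \vee \alpha_1$ for $D \vee (\alpha_0 \supset D)$, which then collapses to $\alpha_0 \vdash D$ via the second axiom scheme. Everything else is routine use of Theorem~\ref{dis} together with HS, DT and MP; note in particular that the pivotal deduction $\alpha_0, \alpha_1 \vdash \alpha$ rests on Theorem~\ref{Robbin} part~(6), which needs neither part~(7) nor the Peirce scheme directly, the only appeal to Peirce entering through the disjunction-elimination packaged in Theorem~\ref{dis} part~(2).
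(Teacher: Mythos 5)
Your proof is correct, and its first half coincides with the paper's: both hinge on Theorem \ref{Robbin} part (6) to get $\alpha_0, \alpha_1 \vdash \alpha$, then Theorem \ref{dis} part (1) to get $\alpha \vdash D$, hence $\alpha_0, \alpha_1 \vdash D$. Where you diverge is in how the two disjunctive hypotheses are discharged. The paper applies DT twice to obtain $\alpha_0 \supset (\alpha_1 \supset D) \in \mathbb{T}$, then simply unfolds the disjunctions definitionally — reading the hypotheses (via the commutativity observation $D \vee \alpha_i \dashv\vdash \alpha_i \vee D$ made earlier in the paper) as $(\alpha_1 \supset D) \supset D$ and $(\alpha_0 \supset D) \supset D$ — so that one application of HS and one of MP finish the argument. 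You instead keep the disjunctions abstract: DT once gives $\alpha_1 \vdash \alpha_0 \supset D$, then Theorem \ref{dis} part (3) trades $D \vee \alpha_1$ for $D \vee (\alpha_0 \supset D)$, part (2) collapses that to $\alpha_0 \supset D$ (using the second axiom scheme for $D \vdash \alpha_0 \supset D$), and part (2) again on $D \vee \alpha_0$ concludes. The paper's finish is leaner — three primitive steps against your three invocations of the disjunction machinery — but yours has the virtue of never unwinding $\vee$ into its defining formula and of respecting the order of the disjuncts, so no commutation is needed. Note, though, that your closing remark slightly oversells the bookkeeping: the paper's route is not Peirce-free either, since the commutativity observation it relies on is itself derived from the Peirce-powered disjunction elimination, so both finishes appeal to Peirce indirectly; the genuinely Peirce-free content in each case is the same, namely the deduction $\alpha_0, \alpha_1 \vdash D$.
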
 

\begin{proof} 
Again write $D = D(\theta)$ for convenience. Say $\alpha = Q(X \supset Y)$ so that $\alpha_0 = QQ X$ and $\alpha_1 = Q Y$. As $\alpha$ is a term of $\theta$, it follows by Theorem \ref{dis} that $\alpha \supset D \in \mathbb{T}$. Theorem \ref{Robbin} part (6) tells us that $\alpha_0 \supset (\alpha_1 \supset \alpha) \in \mathbb{T}$ whence $\alpha_0, \alpha_1 \vdash \alpha$ by MP twice. A further application of MP now yields $\alpha_0, \alpha_1 \vdash D$ whence DT twice yields $\alpha_0 \supset (\alpha_1 \supset D) \in \mathbb{T}$. An application of HS to this and the hypothesis $(\alpha_1 \supset D) \supset D = \alpha_1 \vee D \in \mathbb{T}$ yields $\alpha_0 \supset D \in \mathbb{T}$; an application of MP to this and the hypothesis $(\alpha_0 \supset D) \supset D = \alpha_0 \vee D \in \mathbb{T}$ places $D$ in $\mathbb{T}$. 
\end{proof} 

\medbreak 

Rule B of $\mathbb{U}_Q$  may likewise be viewed as a derived inference rule for IPC. 

\medbreak

\begin{theorem} \label{B}
Let $\theta$ have $\beta$ as a term. If $D(\theta) \vee \beta_0 \in \mathbb{T}$ or $D(\theta) \vee \beta_1 \in \mathbb{T}$ then $D(\theta) \in \mathbb{T}$. 
\end{theorem}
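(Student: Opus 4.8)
The plan is to follow the template of Theorem~\ref{A}, but the disjunctive ``or'' in the hypothesis makes the argument shorter. Write $D = D(\theta)$ for convenience and recall from branching rule B that $\beta = QQ(X \supset Y)$ with $\beta_0 = QX$ and $\beta_1 = QQY$. Since $\beta$ is a term of $\theta$, Theorem~\ref{dis} part~(1) gives $\beta \vdash D$ and hence $\beta \supset D \in \mathbb{T}$ by DT; this implication will serve in both cases.

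The crux is that, unlike in Theorem~\ref{A} where $\alpha$ required \emph{both} of its alternative consequences, here each direct consequence of $\beta$ already implies $\beta$ on its own. Indeed, Theorem~\ref{Robbin} part~(7) applied with $A := X$ and $B := Y$ yields $QX \supset QQ(X \supset Y)$, that is $\beta_0 \supset \beta \in \mathbb{T}$; and part~(5) applied the same way yields $QQY \supset QQ(X \supset Y)$, that is $\beta_1 \supset \beta \in \mathbb{T}$. An application of HS to each of these together with $\beta \supset D$ then produces $\beta_0 \supset D \in \mathbb{T}$ and $\beta_1 \supset D \in \mathbb{T}$.

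From here I would dispose of the two cases symmetrically. If $D \vee \beta_0 \in \mathbb{T}$ then, by the commutativity observation recorded just after the definition of $\vee$, also $\beta_0 \vee D = (\beta_0 \supset D) \supset D \in \mathbb{T}$; a single application of MP to this and $\beta_0 \supset D$ places $D$ in $\mathbb{T}$. If instead $D \vee \beta_1 \in \mathbb{T}$, the identical argument with $\beta_1 \vee D$ and $\beta_1 \supset D$ again delivers $D \in \mathbb{T}$. Since the hypothesis guarantees at least one of the two disjunctions is a theorem, the conclusion follows in either case.

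I do not anticipate any real obstacle: the entire content is recognising that parts~(5) and~(7) of Theorem~\ref{Robbin} convert the direct-consequence structure of rule B into the implications $\beta_1 \supset \beta$ and $\beta_0 \supset \beta$ that MP demands. I would only flag that the Peirce scheme enters precisely here, through part~(7); and that the ``or'' in the hypothesis---against the conjunctive pair of hypotheses in Theorem~\ref{A}---is exactly what lets a single MP replace the HS-then-MP chain used there.
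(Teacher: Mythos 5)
Your proof is correct and takes essentially the same route as the paper's: Theorem \ref{dis} gives $\beta \supset D \in \mathbb{T}$, parts (7) and (5) of Theorem \ref{Robbin} supply $\beta_0 \supset \beta$ and $\beta_1 \supset \beta$, and HS followed by MP against the hypothesis settles each case. The only (harmless) differences are that you derive both implications before splitting into cases and that you make explicit the commutativity of $\vee$ that the paper leaves tacit when it rewrites the hypothesis $D \vee \beta_i$ as $(\beta_i \supset D) \supset D$.
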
 

\begin{proof} 
Yet again write $D = D(\theta)$ for convenience. Say $\beta = QQ(X \supset Y)$ so that $\beta_0 = Q X$ and $\beta_1 = QQ Y$. \par 

Assume that $\beta_0 \vee D \in \mathbb{T}$. As $\beta$ is a term of $\theta$, we have $\beta \supset D \in \mathbb{T}$ by Theorem \ref{dis}. Theorem \ref{Robbin} part (7) tells us that $\beta_0 \supset \beta \in \mathbb{T}$. An application of HS yields $\beta_0 \supset D \in \mathbb{T}$; an application of MP to this and the assumption $(\beta_0 \supset D) \supset D \in \mathbb{T}$ places $D$ in $\mathbb{T}$. \par 

Assume that $\beta_1 \vee D \in \mathbb{T}$. As $\beta$ is a term of $\theta$, we have $\beta \supset D \in \mathbb{T}$ by Theorem \ref{dis}. Theorem \ref{Robbin} part (5) tells us that $\beta_1 \supset \beta \in \mathbb{T}$. An application of HS yields $\beta_1 \supset D \in \mathbb{T}$; an application of MP to this and the assumption $(\beta_1 \supset D) \supset D \in \mathbb{T}$ places $D$ in $\mathbb{T}$.
\end{proof} 

\medbreak 

These three theorems come together as follows. 

\medbreak 

\begin{theorem} \label{theorem}
Each theorem of $\mathbb{U}_Q$ is a theorem of IPC. 
\end{theorem}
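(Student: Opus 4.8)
The plan is to argue by induction on the length of a derivation in $\mathbb{U}_Q$. Recall that a theorem of $\mathbb{U}_Q$ is the final entry of a finite sequence of formulas, each of which is either an axiom of $\mathbb{U}_Q$ or else follows from earlier entries of the sequence by an application of Rule A or Rule B. I would take as induction hypothesis that every formula appearing strictly earlier in such a derivation already lies in $\mathbb{T}$, and show that the current entry lies in $\mathbb{T}$ as well.

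For the base case, a formula that enters the derivation as an axiom of $\mathbb{U}_Q$ lies in $\mathbb{T}$ immediately by Theorem \ref{axioms}. For the inductive step, suppose the entry in question is $D(\theta)$ and that it enters by an inference rule. If it enters by Rule A, then its two premises $D(\theta) \vee \alpha_0$ and $D(\theta) \vee \alpha_1$ appear earlier in the derivation and so lie in $\mathbb{T}$ by the induction hypothesis; Theorem \ref{A} then places $D(\theta)$ in $\mathbb{T}$. If instead it enters by Rule B, then at least one of $D(\theta) \vee \beta_0$ and $D(\theta) \vee \beta_1$ appears earlier and so lies in $\mathbb{T}$; Theorem \ref{B} then places $D(\theta)$ in $\mathbb{T}$. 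In either case the entry is an IPC theorem, which completes the induction.

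I do not anticipate a genuine obstacle, since the three preceding theorems have already discharged all of the logical content: Theorem \ref{axioms} verifies that the axioms are IPC theorems, while Theorems \ref{A} and \ref{B} verify that Rule A and Rule B respectively preserve IPC-theoremhood. The only point that will require care is the bookkeeping—confirming that the premises invoked by Rules A and B are genuinely earlier entries of the derivation, and hence covered by the induction hypothesis—after which the conclusion follows mechanically. This structure is exactly the standard soundness argument for a proof system relative to a semantics or target system, here with $\mathbb{T}$ playing the role of the target.
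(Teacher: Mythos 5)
Your proof is correct and is essentially the paper's own argument: the paper likewise observes that the axioms of $\mathbb{U}_Q$ lie in $\mathbb{T}$ by Theorem \ref{axioms} and that $\mathbb{T}$ is closed under Rules A and B by Theorems \ref{A} and \ref{B}, merely stating this closure property where you unwind it as an explicit induction on derivation length. Your added bookkeeping is the standard way to make the paper's terse statement rigorous, so there is no substantive difference.
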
 

\begin{proof} 
The axioms of $\mathbb{U}_Q$ are themselves theorems of IPC by Theorem \ref{axioms}; Theorem \ref{A} and Theorem \ref{B} show that the set of theorems of IPC is closed under Rule A and Rule B.  
\end{proof} 

\medbreak 

The following result is complementary. 

\medbreak 

\begin{theorem} \label{tautology}
If the IPC formula $Z$ is a tautology then $QQ Z$ is a theorem of $\mathbb{U}_Q$. 
\end{theorem}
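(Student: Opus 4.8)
The plan is to read the two rules of $\mathbb{U}_Q$ as exact inverses of the tableau branching rules and then to run an induction over the finite tree $\mathcal{T}_Q$, from its leaves back to its root $QQ Z$. The basic observation is that appending a formula to a branch corresponds to disjoining that formula: if $\theta = (Z_0, \dots, Z_N)$ then the extended branch $(Z_0, \dots, Z_N, \varphi)$ carries the disjunction $D(\theta) \vee \varphi$, by the inductive definition of multiple disjunction. Consequently the two premises $D(\theta) \vee \alpha_0$ and $D(\theta) \vee \alpha_1$ of Rule A are precisely the disjunctions attached to the two nodes that the tableau places immediately below $\theta$ when it expands an $\alpha$; and likewise the premises of Rule B are the disjunctions attached to the nodes the tableau creates when it expands a $\beta$.

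Since $Z$ is a tautology, the theory of signed dual tableaux in [6] furnishes a finite complete closed tableau $\mathcal{T}$ starting from $T Z$, and hence the transformed finite complete closed tableau $\mathcal{T}_Q$ starting from $QQ Z$. I would then prove, by induction on the height of the subtree rooted at a node $\nu$, that $D(\theta_\nu)$ is a theorem of $\mathbb{U}_Q$, where $\theta_\nu$ denotes the branch running from the root of $\mathcal{T}_Q$ down to $\nu$. For the base case, $\nu$ is a leaf and $\theta_\nu$ is a complete branch; since $\mathcal{T}_Q$ is closed, $\theta_\nu$ is closed, so $D(\theta_\nu)$ is by definition an axiom of $\mathbb{U}_Q$ and thus a theorem.

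For the inductive step I would distinguish the two expansion types. If $\nu$ was expanded by a Type A formula $\alpha$ (which is necessarily already a term of $\theta_\nu$), then its two children carry the branches obtained from $\theta_\nu$ by appending $\alpha_0$ and $\alpha_1$ respectively; the inductive hypothesis makes $D(\theta_\nu) \vee \alpha_0$ and $D(\theta_\nu) \vee \alpha_1$ theorems of $\mathbb{U}_Q$, and a single application of Rule A delivers $D(\theta_\nu)$. If instead $\nu$ was expanded by a Type B formula $\beta$, the branch continues linearly through $\beta_0$ and $\beta_1$; here the inductive hypothesis makes the disjunction attached to the child a theorem, and Rule B --- which requires only one of the disjuncts $\beta_0$, $\beta_1$ --- returns $D(\theta_\nu)$. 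Applying the established claim at the root, whose branch is the one-term sequence $(QQ Z)$ with $D = QQ Z$, yields the theorem.

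The step I expect to demand the most care is the Type B case, since the tableau rule appends both direct consequences $\beta_0$ and $\beta_1$ to the branch whereas Rule B consumes only a single disjunct at a time. I would therefore be explicit about whether $\beta_0$ and $\beta_1$ occupy one node or two successive nodes, peeling them off by the corresponding number of Rule B applications, and I would check throughout that the side condition requiring the relevant $\alpha$ or $\beta$ to be a term of $\theta_\nu$ holds automatically, since a tableau only ever expands a formula already present on the branch being extended.
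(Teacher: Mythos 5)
Your proposal is correct and takes essentially the same route as the paper: the paper's ``pruning'' of the closed tableau $\mathcal{T}_Q$ --- reversing the construction steps, with closed branches supplying the axioms and Rules A and B undoing each expansion --- is exactly your leaf-to-root induction on subtree height. Your explicit handling of the Type B case (peeling off $\beta_0$ and $\beta_1$ with the appropriate number of Rule B applications, and checking the side condition that the expanded $\alpha$ or $\beta$ already occurs on the branch) simply makes precise a detail that the paper's one-sentence pruning argument glosses over.
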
 

\begin{proof} 
Construct a closed signed dual tableau $\mathcal{T}$ for $TZ$ and then the transformed tableau $\mathcal{T}_Q$ starting from $QQ Z$ as above. Each branch $\theta$ of $\mathcal{T}_Q$ is closed, so each corresponding disjunction $D(\theta)$ is an axiom of $\mathbb{U}_Q$. Prune the tableau by reversing the steps in its construction: at each stage, pruning a branch $\theta$ applies Rule A or Rule B to the corresponding disjunction $D(\theta)$ and therefore yields a theorem of $\mathbb{U}_Q$; the final pruning exposes the root $QQ Z$ so we conclude that $QQ Z = D(QQZ)$ is a theorem of $\mathbb{U}_Q$. 
\end{proof} 

We are now in a position to present a new proof of completeness for IPC. 

\medbreak 

\begin{theorem} 
The Implicational Propositional Calculus IPC is complete. 
\end{theorem}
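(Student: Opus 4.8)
The plan is to run an arbitrary tautology through the machinery assembled above, and then to exploit the fact that the entire apparatus $\mathbb{U}_Q$ is parametrized by a freely chosen formula $Q$. Theorems~\ref{axioms}, \ref{A}, \ref{B}, \ref{theorem}, and \ref{tautology} all hold for \emph{any} fixed $Q$, and the completeness argument will turn on a single judicious choice of $Q$ at the very end.

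So let $Z$ be an arbitrary tautology of IPC, written (as stipulated) without the $Q$-abbreviations. First I would invoke Theorem~\ref{tautology}, which tells me that $QQ Z$ is a theorem of $\mathbb{U}_Q$. Then Theorem~\ref{theorem}, which places every theorem of $\mathbb{U}_Q$ inside $\mathbb{T}$, yields $QQ Z \in \mathbb{T}$. At this point I would have $QQ Z = (Z \supset Q) \supset Q \in \mathbb{T}$ for every choice of $Q$, and it remains only to descend from $QQ Z$ to $Z$ itself.

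The crucial step is the specialization $Q := Z$. With this choice $QQ Z$ becomes $(Z \supset Z) \supset Z$, so the previous paragraph delivers $(Z \supset Z) \supset Z \in \mathbb{T}$. Since $Z \supset Z$ is a theorem scheme of IPC (noted in the discussion of the Deduction Theorem, and requiring no Peirce axiom), a single application of MP to $(Z \supset Z) \supset Z$ and $Z \supset Z$ places $Z$ in $\mathbb{T}$. As $Z$ was an arbitrary tautology, every tautology is a theorem and IPC is complete.

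The real content lies entirely in the earlier theorems; this closing argument is short, and I expect no genuine obstacle. The one point deserving care --- and the only true insight here --- is that the collapse of $QQ Z$ to a formula from which $Z$ follows is engineered by setting $Q := Z$, which is legitimate precisely because the preceding results are uniform in $Q$. I would make explicit that nothing in the construction of $\mathcal{T}_Q$, nor in Theorems~\ref{axioms}--\ref{tautology}, constrains $Q$ away from $Z$, so the substitution is licit and the proof is complete.
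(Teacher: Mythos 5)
Your proposal is correct and follows the paper's own proof essentially verbatim: invoke Theorem~\ref{tautology} and then Theorem~\ref{theorem} to place $QQZ = (Z \supset Q) \supset Q$ in $\mathbb{T}$ for arbitrary $Q$, and then make the key specialization $Q := Z$. The only (harmless) divergence is the final step: you apply MP to $(Z \supset Z) \supset Z$ using the theorem $Z \supset Z$ as minor premise, whereas the paper instead applies MP using the Peirce instance $[(Z \supset Z) \supset Z] \supset Z$ as major premise --- both are valid one-step finishes, and yours has the mild aesthetic advantage of not invoking Peirce at this point (though Peirce remains essential earlier, in Theorem~\ref{Robbin} part (7) and in disjunction elimination).
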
 

\begin{proof} 
Let $Z$ be a tautology of IPC. Theorem \ref{tautology} tells us that if $Q$ is any IPC formula then $QQ Z$ is a theorem of $\mathbb{U}_Q$ and Theorem \ref{theorem} tells us that $QQ Z = (Z \supset Q) \supset Q$ is a theorem of IPC. Now simply take $Q := Z$!  This shows that $(Z \supset Z) \supset Z$ is a theorem of IPC; MP and the Peirce axiom scheme then show that $Z$ itself is a theorem of IPC. 
\end{proof}

\bigbreak

\begin{center} 
{\small R}{\footnotesize EFERENCES}
\end{center} 
\medbreak 

[1] H. Leblanc and D.P. Snyder, {\it Duals of Smullyan Trees}, Notre Dame Journal of Formal Logic (1972) XIII(3) 387-393. 

[2] J. W. Robbin, {\it Mathematical Logic - A First Course}, W.A. Benjamin (1969); Dover Publications (2006).

[3] P.L. Robinson, {\it Implicational Completeness}, arXiv 1511.02953 (2015). 

[4] P.L. Robinson, {\it The Peirce axiom scheme and suprema}, arXiv 1511.07074 (2015). 

[5] R.M. Smullyan, {\it Logical Labyrinths}, A.K. Peters (2009). 

[6] R.M. Smullyan, {\it A Beginner's Guide to Mathematical Logic}, Dover Publications (2014). 

\medbreak

\end{document}